\numberwithin{equation}{section}
\newtheorem{theorem}{Theorem}[section]
\newtheorem{corollary}[theorem]{Corollary}
\newtheorem{lemma}[theorem]{Lemma}
\newtheorem{proposition}[theorem]{Proposition}
\theoremstyle{definition}
\newtheorem{definition}[theorem]{Definition}
\theoremstyle{remark}
\newtheorem{remark}[theorem]{Remark}
\begin{document}

\title[Higgs bundles and representation spaces associated to morphisms]{Higgs bundles
and representation spaces associated to morphisms}

\author[I. Biswas]{Indranil Biswas}

\address{School of Mathematics, Tata Institute of Fundamental
Research, Homi Bhabha Road, Bombay 400005, India}

\email{indranil@math.tifr.res.in}

\author[C. Florentino]{Carlos Florentino}

\address{Departamento Matem\'atica, IST, University of Lisbon, Av. Rovisco
Pais, 1049-001 Lisbon, Portugal}

\email{carlos.florentino@tecnico.ulisboa.pt}

\subjclass[2000]{14J60}

\keywords{Higgs bundle, flat connection, representation space, deformation retraction.}

\date{}

\begin{abstract}
Let $G$ be a connected reductive affine algebraic group defined over the
complex numbers, and
$K\,\subset\, G$ be a maximal compact subgroup. Let $X\, , Y$ be irreducible smooth complex
projective varieties and $f\,:\, X\, \longrightarrow\, Y$ an
algebraic morphism, such that $\pi_1(Y)$ is virtually nilpotent and the
homomorphism $f_*\, :\, \pi_1(X)\, \longrightarrow\,\pi_1(Y)$ is surjective.
Define
$$
{\mathcal R }^f(\pi_1(X),\, G)\,=\, \{\rho\, \in\, \text{Hom}(\pi_1(X),\, G)\,
\mid\, A\circ\rho \ \text{ factors through }~ f_*\}\, ,
$$
$$
{\mathcal R }^f(\pi_1(X),\, K)\,=\, \{\rho\, \in\, \text{Hom}(\pi_1(X),\, K)\,
\mid\, A\circ\rho \ \text{ factors through }~ f_*\}\, ,
$$
where $A\,:\, G\, \longrightarrow\, \text{GL}(\text{Lie}(G))$ is the adjoint action.
We prove that the geometric invariant theoretic quotient
${\mathcal R }^f(\pi_1(X, x_0),\, G)/\!\!/G$ admits a deformation
retraction to ${\mathcal R }^f(\pi_1(X, x_0),\, K)/K$. We also show that the space of
conjugacy classes of $n$ almost commuting elements in $G$ admits a deformation
retraction to the space of conjugacy classes of $n$ almost commuting elements in $K$.
\end{abstract}

\thanks{The first author is supported by a J. C. Bose Fellowship. The second author is
partially supported by FCT (Portugal) through
the projects EXCL/MAT-GEO/0222/2012, PTDC/MAT/120411/2010 and PTDC/MAT-GEO/0675/2012.}

\maketitle

\section{Introduction}

Let $G$ be a connected reductive affine algebraic group defined over
the complex numbers. Consider an algebraic morphism
$$
f\, :\, X\, \longrightarrow\, Y
$$
where $X$ and $Y$ are irreducible smooth complex projective varieties, and let
$$
f_*\, :\, \pi_1(X, x_0)\, \longrightarrow\, \pi_1(Y, f(x_0))
$$
be the induced morphism of fundamental groups, where $x_0\,\in\,X$ is a base point.
In certain situations, the representations 
$$
\rho: \pi_1(X, x_0) \, \longrightarrow\, G
$$
that factor through $f_*$ have special geometric properties. 
See \cite{KP}, where necessary and sufficient conditions
for such a factorization are given in terms of the spectral curve 
of the $G$-Higgs bundle associated to $\rho$.

In this article, we are interested in the whole moduli space of representations 
that factor in a similar way, and in its topological properties.
Under some assumptions on $f$ and $Y$, we provide a
natural deformation retraction between two such representation spaces, 
described as follows.

The Lie algebra of $G$ will
be denoted by $\mathfrak g$. 
Let $A\,:\, G\, \longrightarrow\, \text{GL}({\mathfrak g})$ be the homomorphism
given by the adjoint action of $G$ on $\mathfrak g$. Fix a maximal compact
subgroup $K\, \subset\, G$ and define:
$$
{\mathcal R }^f(\pi_1(X, x_0),\, G)\,=\, \{\rho\, \in\, \text{Hom}(\pi_1(X,x_0),\, G)\,
\mid\, A\circ\rho \ \text{ factors through }~ f_*\}\, ,
$$
$$
{\mathcal R }^f(\pi_1(X, x_0),\, K)\,=\, \{\rho\, \in\, \text{Hom}(\pi_1(X,x_0),\, K)\,
\mid\, A\circ\rho \ \text{ factors through }~ f_*\}\, .
$$
We note that the group $G$ (respectively, $K$) acts on ${\mathcal R }^f(\pi_1(X, x_0),\, G)$
(respectively, on ${\mathcal R }^f(\pi_1(X, x_0),\, K)$) via the conjugation action
of $G$ (respectively, $K$) on itself.
The quotient ${\mathcal R }^f(\pi_1(X, x_0),\, K)/K$
is contained in the geometric invariant theoretic quotient
${\mathcal R }^f(\pi_1(X, x_0),\, G)/\!\!/G$.

We prove the following in Theorem \ref{thm1}:

\textit{ Suppose that the fundamental group of $Y$ is virtually nilpotent, and the homomorphism $f_*$ is surjective. 
Then ${\mathcal R }^f(\pi_1(X, x_0),\, G)/\!\!/G$ admits a deformation
retraction to the subset ${\mathcal R }^f(\pi_1(X, x_0),\, K)/K$.}

In Section 3, we consider spaces of almost commuting elements in $K$ and in $G$. 
Define:
$$
{\rm AC}^n(K)\,=\, \{(g_1\, ,\cdots\, , g_n)\,\in\, K^n\, \mid\, g_ig_jg^{-1}_ig^{-1}_j
\,\in\, Z_K~\, \ \forall \ i\, ,j\}\, ,
$$
where $Z_K$ denotes the center of $K$. The moduli space of conjugacy classes:
$$
{\rm AC}^n(K) \, / \, K\, ,
$$
where $K$ acts by simultaneous conjugation, was studied in \cite{BFM}, \cite{KS},
and plenty of information is known in the cases $n=2$ and $n=3$. 
For instance, the number of components of ${\rm AC}^3(K) \, / \, K$ 
has been related in \cite{BFM} to the Chern--Simons invariants 
associated to flat connections on a 3-torus.

In a similar fashion, we define ${\rm AC}^n(G)/\!\!/G$, the moduli space of
conjugacy classes of $n$ almost commuting elements in $G$.
For example, if $G$ has trivial center, then ${\rm AC}^{2n}(G)/\!\!/G$ coincides with
$$
\text{Hom}(\pi_1(X,x_0),\, G)/\!\!/G\, ,
$$ 
where $X$ is an abelian variety of complex dimension $n$.
In Proposition \ref{prop1}, we show that ${\rm AC}^n(G) \, / \, G$ admits a
deformation retraction to ${\rm AC}^n(K) \, / \, K$, and that the same holds for
${\rm AC}^n(G)$ and ${\rm AC}^n(K)$, extending one of the main results in
\cite{FL} and \cite{BF1}.

\section{Representation spaces associated to a morphism}

Let $X$ be an irreducible smooth complex projective variety. Fix a point
$x_0\, \in\, X$. Let
\begin{equation*}
f\, :\, X\, \longrightarrow\, Y
\end{equation*}
be an algebraic morphism, where $Y$ is also an irreducible smooth complex projective
variety, such that:
\begin{enumerate}
\item the fundamental group $\pi_1(Y, f(x_0))$ is virtually nilpotent, and

\item the homomorphism of fundamental groups induced by $f$
\begin{equation}\label{e2}
f_*\, :\, \pi_1(X, x_0)\, \longrightarrow\,\pi_1(Y, f(x_0))
\end{equation}
is surjective.
\end{enumerate}
Using the homomorphism $f_*$ in \eqref{e2}, we will consider $\pi_1(Y, f(x_0))$ as a
quotient of the group $\pi_1(X,x_0)$.

Let $G$ be a connected reductive affine algebraic group defined over $\mathbb C$. The
Lie algebra of $G$ will be denoted by $\mathfrak g$. Let
\begin{equation}\label{e0}
A\, :\, G\, \longrightarrow\, \text{GL}(\mathfrak g)
\end{equation}
be the homomorphism given by the adjoint action of $G$ on $\mathfrak g$.
The affine algebraic variety (not necessarily irreducible) of representations
$$
\rho\, : \pi_1(X, x_0) \longrightarrow G
$$
will be denoted by $\text{Hom}(\pi_1(X,x_0),\, G)$.

\begin{definition}
Let $\rho \in \text{Hom}(\pi_1(X,x_0),\, G)$. We sat that 
$A\circ \rho$ factors through $f_*$ in \eqref{e2} 
(or that $A\circ \rho$ \emph{factors geometrically} through $f:X\to Y$, see \cite{KP})
if there exists a homomorphism $\rho' \in \text{Hom}(\pi_1(Y,f(x_0)), \text{GL}(\mathfrak g))$
such that 
\begin{equation}\label{e3}
\rho'\circ f_*\,=\, A\circ\rho\, .
\end{equation}
\end{definition}

\begin{remark}
(1) Clearly, if $\rho$ itself factorizes as $\rho = \tilde\rho \, \circ f_*$ for some
$\tilde\rho \in \text{Hom}(\pi_1(X,x_0), G)$, then $A\circ\rho$ factorizes through $f_*$ as in the definition;
the converse is not always true.  \newline
(2) It is clear that $A\circ\rho\in \text{Hom}(\pi_1(X,x_0), \text{GL}(\mathfrak g))$ factors through $f_*$ as in \eqref{e3}, 
if and only if $A\circ\rho$ is trivial on the kernel of $f_*$.
Moreover, when $A\circ\rho$ factors through $f_*$, a homomorphism $\rho' \in \text{Hom}(\pi_1(Y,f(x_0)), \text{GL}(\mathfrak g))$
satisfying equation \eqref{e3} is unique, because $f_*$ is surjective.
\end{remark}

In the framework of non-abelian Hodge theory, there is a correspondence between 
semistable $G$-Higgs bundles over $X$ and representations in 
$\text{Hom}(\pi_1(X,x_0),\, G)$, \cite{Si}, \cite{BG}. Denote by $(E_\rho\, 
,\theta_\rho)$ the semistable $G$--Higgs bundle on $X$ associated to $\rho$ under 
this correspondence. We note that $(E_\rho\, ,\theta_\rho)$ is semistable with 
respect to every polarization on $X$.

\begin{lemma}\label{lem1}
Let $\rho \in \text{Hom}(\pi_1(X,x_0),\, G)$ be such that
$A\circ \rho$ factors through $f_*$. Then, the above principal $G$--bundle $E_\rho$ on $X$ is semistable.
\end{lemma}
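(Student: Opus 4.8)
The plan is to show that the principal $G$-bundle $E_\rho$ is semistable by analyzing the Higgs field $\theta_\rho$ and using the hypothesis on $\pi_1(Y)$. The key observation is that $A\circ\rho$ factoring through $f_*$ means the adjoint bundle $\text{ad}(E_\rho)$, together with its natural Higgs structure, is pulled back from $Y$ — more precisely, the associated flat $\text{GL}(\mathfrak g)$-bundle is $f^*$ of a flat bundle on $Y$ (here I use Remark (2): $A\circ\rho$ kills $\ker f_*$, and surjectivity of $f_*$ gives the descended $\rho'$ on $\pi_1(Y)$). Since $\pi_1(Y)$ is virtually nilpotent, every semisimple representation of $\pi_1(Y)$ — and the semisimplification of $\rho'$ in particular — has, by the structure theory of virtually nilpotent groups, image whose identity component is abelian; correspondingly the flat bundle $\rho'$ on $Y$ underlies a Higgs bundle whose Higgs field, in the Simpson correspondence, is "as trivial as possible." Concretely, I expect the relevant consequence to be that the Higgs field of $f^*(E_{\rho'})$ — hence a piece controlling $\theta_\rho$ — vanishes or has nilpotent/central type.

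**First I would** recall that by the non-abelian Hodge correspondence on $X$ (Simpson, \cite{Si}, \cite{BG}), $E_\rho$ carries a Higgs field $\theta_\rho\in H^0(X,\,\text{ad}(E_\rho)\otimes\Omega^1_X)$, and $(E_\rho,\theta_\rho)$ is polystable of degree $0$ with vanishing Chern classes. The principal bundle $E_\rho$ by itself is semistable if and only if the Higgs field does not "destabilize," i.e., one must rule out the existence of a reduction $E_P\subset E_\rho$ to a proper parabolic $P\subset G$ with $\theta_\rho\in H^0(X,\,\text{ad}(E_P)\otimes\Omega^1_X)$ and $\deg(E_P)(\chi)>0$ for a dominant character $\chi$ of $P$. **Next**, I would translate the factorization hypothesis into a statement about $\theta_\rho$: the adjoint Higgs bundle $(\text{ad}(E_\rho),\,\text{ad}(\theta_\rho))$ equals $f^*$ of the Higgs bundle on $Y$ corresponding to $\rho':\pi_1(Y)\to\text{GL}(\mathfrak g)$. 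Because $\pi_1(Y)$ is virtually nilpotent, a theorem on representations of such groups (see the literature on harmonic maps / Corlette–Simpson rigidity, or elementary nilpotent-group theory applied to the semisimple part) forces the corresponding harmonic bundle on $Y$ to have its Higgs field take values in a bundle of abelian — indeed central — subalgebras, so that $\text{ad}(\theta_\rho)$ acts nilpotently; combined with polystability this should force $\text{ad}(\theta_\rho)=0$, equivalently $\theta_\rho$ takes values in the center $\mathfrak z(\mathfrak g)\subset\mathfrak g$. Since $G$ is reductive, $\mathfrak z(\mathfrak g)$ exponentiates to a central torus; a central-valued Higgs field cannot destabilize any reduction to a parabolic (parabolics contain the center, and the center contributes nothing to $\deg(E_P)(\chi)$), so $E_\rho$ is semistable.

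**The main obstacle** will be making precise the step "$\pi_1(Y)$ virtually nilpotent $\Rightarrow$ $\text{ad}(\theta_\rho)$ is nilpotent (or central)." One clean route: pass to the semisimplification $\rho'^{ss}$ of $\rho'$; a semisimple representation of a virtually nilpotent group has virtually abelian image, so after a finite étale cover $\widetilde Y\to Y$ (which pulls back to a finite étale cover of $X$, not changing semistability since semistability descends along finite étale maps in char $0$) the representation becomes abelian, hence its harmonic bundle is a direct sum of rank-one Higgs bundles, and a rank-one Higgs bundle on a projective variety has Higgs field in $H^0(X,\Omega^1_X)$ which need not vanish — but the point is that $\text{ad}$ of an \emph{abelian} representation into $\text{GL}(\mathfrak g)$ factoring through $A$ forces the image to lie in a maximal torus of $G$, whence $\theta_\rho$ itself lies in the Cartan, and a Cartan-valued Higgs field still does not destabilize a reduction to a parabolic containing that Cartan — which after a further argument (conjugating so the relevant parabolic contains the chosen maximal torus) gives semistability. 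I would also need to handle the nilpotent (non-semisimple) part of $\rho$ separately, but nilpotent extensions of a torus land in a Borel, and a Borel reduction with central-modulo-unipotent Higgs field again does not violate the slope inequality; so the upshot is that the Harder–Narasimhan reduction of $E_\rho$, if nontrivial, would be $\theta_\rho$-invariant and contradict polystability of $(E_\rho,\theta_\rho)$. Assembling these pieces — descent along finite covers, the torus/Borel structure of virtually nilpotent image, and the parabolic-degree computation — is the crux; the rest is bookkeeping.
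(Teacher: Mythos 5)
Your setup is right as far as it goes: the factorization hypothesis does give a unique $\rho'$ on $\pi_1(Y,f(x_0))$ with $(\mathrm{ad}(E_\rho),\mathrm{ad}(\theta_\rho))\,=\,f^*(E',\theta')$, and this is exactly where the paper starts. But the core of your argument rests on two claims that are not correct. First, you state that $E_\rho$ is semistable if and only if there is no destabilizing parabolic reduction $E_P$ \emph{with} $\theta_\rho\in H^0(X,\mathrm{ad}(E_P)\otimes\Omega^1_X)$. That is the definition of semistability of the \emph{Higgs} bundle $(E_\rho,\theta_\rho)$, which holds automatically by the nonabelian Hodge correspondence; the lemma asserts semistability of the underlying principal bundle, i.e.\ the degree inequality for \emph{all} parabolic reductions, including those not preserved by $\theta_\rho$. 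Your later fallback — that a nontrivial Harder--Narasimhan reduction "would be $\theta_\rho$-invariant" — is precisely the point that fails in general (a semistable Higgs bundle can easily have an unstable underlying bundle, with HN filtration not preserved by the Higgs field), so it cannot be assumed. Second, the claim that virtual nilpotence of $\pi_1(Y)$ forces $\mathrm{ad}(\theta_\rho)$ to be nilpotent, or $\theta_\rho$ to be central, is false: take $Y=X$ an elliptic curve, $f=\mathrm{id}$, $G=\mathrm{SL}_2$, and $\rho$ the representation of $\mathbb{Z}^2$ generated by $\mathrm{diag}(\lambda,\lambda^{-1})$ with $|\lambda|\neq 1$; the associated Higgs field is a nonzero regular semisimple element of a Cartan, so $\mathrm{ad}(\theta_\rho)$ has nonzero eigenvalues. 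Cartan-valued, yes (after semisimplification and a finite cover); central or nilpotent, no — and "a Cartan-valued Higgs field cannot destabilize" does not follow, because a given destabilizing parabolic reduction of $E_\rho$ need not be compatible with that Cartan at every point; you cannot conjugate a sub\emph{bundle} of parabolics into position.

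The step your proposal is missing is the actual mechanism of the paper's proof: (i) by \cite[Proposition 3.1]{BF2}, virtual nilpotence of $\pi_1(Y)$ implies that the \emph{underlying vector bundle} $E'$ on $Y$ is semistable (with no claim about $\theta'$); (ii) $c_i(E')=0$ for $i>0$ since $E'$ underlies a flat bundle; (iii) by \cite[Theorem 5.1]{BB}, a semistable bundle with vanishing Chern classes admits a filtration whose graded pieces carry flat unitary connections, and this structure — unlike bare semistability — is preserved under pullback by an \emph{arbitrary} morphism $f$, so $f^*E'=\mathrm{ad}(E_\rho)$ is semistable; (iv) semistability of $\mathrm{ad}(E_\rho)$ implies semistability of $E_\rho$ by \cite[Proposition 2.10]{AB}. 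Step (iii) is the technical heart that your proposal never confronts: $f$ is not finite étale, and pullback of a merely semistable bundle under a general morphism need not be semistable, so one must use the numerical flatness of $E'$. Your instinct to exploit the abelian-after-a-finite-cover structure of virtually nilpotent groups is in the spirit of how \cite[Proposition 3.1]{BF2} is proved, but it is deployed here toward a false intermediate statement about $\theta_\rho$ rather than toward the semistability of $E'$ and its pullback.
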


\begin{proof}
Let
$$
\text{ad}(E_\rho)\,:=\, E_\rho\times^A \mathfrak g\,\longrightarrow\, X
$$
be the adjoint vector bundle of $E_\rho$. The Higgs field on $\text{ad}(E_\rho)$ induced
by $\theta_\rho$ will be denoted by $\text{ad}(\theta_\rho)$.

Let $\rho'\, :\, \pi_1(Y, f(x_0))\, \longrightarrow\, \text{GL}(\mathfrak g)$ be the
unique homomorphism satisfying equation \eqref{e3}; the uniqueness of $\rho'$ is a consequence
of the surjectivity of $f_*$ as remarked above.
Let $(E'\, ,\theta')$ be the semistable Higgs vector bundle on $Y$ associated to this
homomorphism $\rho'$. Since the fundamental group of $Y$ is virtually nilpotent,
we know that the vector bundle $E'$ is semistable \cite[Proposition 3.1]{BF2}.
Let $c_i(E'), \,  \, i\, \geq 0$, be the sequence of Chern classes of the bundle $E'$. Then,
$c_i(E')\,=\, 0$ for all $i\, >\ 0$ because the $C^\infty$ complex vector bundle
underlying $E'$ admits a flat connection (it is isomorphic to the $C^\infty$ complex vector
bundle underlying the flat vector bundle associated to $\rho'$).
Therefore, by \cite[p. 39, Theorem 5.1]{BB}, the vector bundle $E'$ admits a filtration
$$
0\, =\, V_0 \, \subset\, V_1 \, \subset\, \cdots \, \subset\,V_{\ell-1} \, \subset\, V_\ell
\,=\, E'
$$
of holomorphic subbundles such that each successive quotient $V_i/V_{i-1}$,
$1\,\leq\, i\, \leq\, \ell$, admits a flat unitary connection. Consider the pulled back
filtration
\begin{equation}\label{b1}
0\, =\, f^*V_0 \, \subset\, f^*V_1 \, \subset\, \cdots \, \subset\,f^*V_{\ell-1} \, \subset\,
f^*V_\ell \,=\, f^*E'\, .
\end{equation}
A flat unitary connection on $V_i/V_{i-1}$ pulls back to a flat unitary connection on
$$
f^*V_i/(f^*V_{i-1})\,=\, f^*(V_i/V_{i-1})\, .
$$
Since each successive quotient for the filtration of $f^*E'$ in \eqref{b1} admits a
flat unitary connection, we conclude that the holomorphic vector bundle $f^*E'$ is
semistable.

{}From \eqref{e3} it follows that
\begin{equation}\label{e4}
(\text{ad}(E_\rho)\, , \text{ad}(\theta_\rho))\,=\, (f^*E'\, ,f^*\theta')\, .
\end{equation}
Since $f^*E'$ is semistable, from \eqref{e4} it follows that $\text{ad}(E_\rho)$
is semistable. This implies that the principal $G$--bundle $E_\rho$ is semistable \cite[p. 214,
Proposition 2.10]{AB}.
\end{proof}

Lemma \ref{lem1} has the following corollary:

\begin{corollary}\label{cor1}
For any Higgs field $\theta$, the $G$--Higgs bundle $(E_\rho\, ,\theta)$
is semistable.
\end{corollary}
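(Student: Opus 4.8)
The plan is to deduce this immediately from Lemma \ref{lem1} together with the definition of semistability of a $G$--Higgs bundle. Recall (see \cite{BG} and \cite[p.~214, Proposition 2.10]{AB}) that, once a polarization on $X$ has been fixed, a $G$--Higgs bundle $(E\, ,\theta)$, with $\theta\,\in\, H^0(X,\, \text{ad}(E)\otimes\Omega^1_X)$, is semistable if for every proper parabolic subgroup $P$ of $G$, every reduction of structure group $E_P\,\subset\, E$ to $P$ such that $\theta$ is a section of the subbundle $\text{ad}(E_P)\otimes\Omega^1_X\,\subset\, \text{ad}(E)\otimes\Omega^1_X$, and every anti-dominant character $\chi$ of $P$, the line bundle $E_P\times^\chi{\mathbb C}$ associated to $\chi$ satisfies the requisite degree inequality; whereas the principal $G$--bundle $E$ alone is semistable precisely when the same degree inequalities hold for \emph{all} reductions $E_P$ to proper parabolic subgroups, with no constraint involving a Higgs field.

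First I would record these two notions of semistability side by side, noting that by Lemma \ref{lem1} --- and by the remark preceding it, which gives semistability of $E_\rho$ with respect to every polarization --- the bundle $E_\rho$ satisfies the stronger of the two conditions. Then, for an arbitrary Higgs field $\theta$ on $E_\rho$, I would observe that the family of parabolic reductions that must be checked in order to conclude semistability of the Higgs bundle $(E_\rho\, ,\theta)$ is simply the subfamily of those reductions $E_P$ of $E_\rho$ that are compatible with $\theta$ in the sense above; over each of these, the required inequality is exactly the one furnished by Lemma \ref{lem1}. Hence $(E_\rho\, ,\theta)$ is semistable, for any $\theta$.

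There is no genuine obstacle here: the only mechanism at work is that prescribing a Higgs field $\theta$ can only shrink, never enlarge, the set of parabolic reductions over which the slope inequality is to be verified, so semistability of the underlying principal bundle is a formally stronger statement than semistability of the Higgs pair. The sole point calling for a little care is to use a formulation of semistability of $G$--Higgs bundles --- such as the Ramanathan-type definition in terms of parabolic reductions and anti-dominant characters employed in \cite{AB}, \cite{BG} --- in which $\theta$ enters only through the restriction on admissible reductions and never through the degree bound itself; with the standard definition this is transparent, and the corollary follows in one line.
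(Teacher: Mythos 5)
Your argument is correct and is exactly the (unwritten) reasoning the paper intends: semistability of the underlying principal bundle $E_\rho$ established in Lemma \ref{lem1} is a formally stronger condition than semistability of the pair $(E_\rho\, ,\theta)$, since introducing a Higgs field only restricts the class of parabolic reductions to be tested. The paper states this as an immediate corollary with no written proof, so your write-up simply makes the same one-line deduction explicit.
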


Let
\begin{equation}\label{e5}
\rho^\lambda\, :\, \pi_1(X, x_0)\, \longrightarrow\, G
\end{equation}
be a homomorphism corresponding to the Higgs $G$--bundle $(E_\rho\, ,\lambda\cdot\theta_\rho)$, which is semistable
by Corollary \ref{cor1}. We note that although $\rho^\lambda$ is not uniquely determined by
$(E_\rho\, ,\lambda\cdot\theta_\rho)$, the point in the quotient space
$$
\text{Hom}(\pi_1(X,x_0),\, G)/G
$$
given by $\rho^\lambda$ does not depend on the choice of $\rho^\lambda$. In other words,
any two different choices of $\rho^\lambda$ differ by an inner automorphism of the group $G$.

\begin{lemma}\label{lem2}
For every $\lambda\, \in\, \mathbb C$, the homomorphism $A\circ\rho^\lambda$ factors through  $f_*$, where 
$\rho^\lambda$ is defined in \eqref{e5}.
\end{lemma}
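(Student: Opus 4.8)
The plan is to show that $A\circ\rho^\lambda$ is trivial on $\ker(f_*)$, which by part (2) of the Remark is equivalent to factoring through $f_*$. First I would recall the identification \eqref{e4}, which says that the adjoint Higgs bundle of $(E_\rho,\theta_\rho)$ is the pullback $(f^*E',f^*\theta')$; more generally, for the scaled Higgs field $\lambda\cdot\theta_\rho$ we get
$$
(\text{ad}(E_\rho)\, ,\text{ad}(\lambda\cdot\theta_\rho))\,=\,(f^*E'\, ,\lambda\cdot f^*\theta')\,=\,(f^*E'\, ,f^*(\lambda\cdot\theta'))\, .
$$
The key point is that, by construction, the $G$--Higgs bundle $(E_\rho,\lambda\cdot\theta_\rho)$ corresponds to $\rho^\lambda$ under the non-abelian Hodge correspondence, and this correspondence is compatible with the associated-bundle construction for the adjoint representation $A$. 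Hence the flat vector bundle underlying $\text{ad}(E_\rho)$ with Higgs field $\text{ad}(\lambda\cdot\theta_\rho)$ is exactly the one associated to the representation $A\circ\rho^\lambda$. Combining this with the displayed identity, $A\circ\rho^\lambda$ is (up to conjugation, which does not affect factorization through $f_*$) the monodromy of the flat bundle underlying $(f^*E',f^*(\lambda\cdot\theta'))$.

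Next I would observe that the flat bundle underlying a pullback Higgs bundle $(f^*E',f^*\psi)$ on $X$ has monodromy representation equal to $\sigma\circ f_*$, where $\sigma$ is the monodromy of the flat bundle underlying $(E',\psi)$ on $Y$. Indeed, the harmonic metric on $(E',\psi)$ solving Hitchin's equations pulls back to a harmonic metric on $(f^*E',f^*\psi)$, so the corresponding flat connection on $X$ is the pullback of the flat connection on $Y$; pullback of flat connections under $f$ induces precisely precomposition of monodromy with $f_*$. Therefore the monodromy of the flat bundle underlying $(f^*E',f^*(\lambda\cdot\theta'))$ factors through $f_*$, and hence so does $A\circ\rho^\lambda$ (after the harmless conjugation). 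This is exactly the assertion that $A\circ\rho^\lambda$ factors through $f_*$.

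The main obstacle I anticipate is justifying the compatibility of the non-abelian Hodge correspondence with pullback along $f$ — that is, that the harmonic metric and associated flat connection genuinely pull back — at the level needed here, since $f$ need not be a submersion or even surjective on the nose, so $f^*$ of a Higgs bundle must be handled with care (though it is still a semistable Higgs bundle with vanishing Chern classes by Corollary \ref{cor1} and the argument in Lemma \ref{lem1}, so the Simpson correspondence applies). An alternative, possibly cleaner, route that sidesteps metrics: work purely with $\rho'$. Since $\rho^\lambda$ corresponds to a Higgs bundle whose adjoint bundle is the pullback of the fixed Higgs bundle $(E',\theta')$ on $Y$ (independently of $\lambda$, as the scaling only rescales $\theta'$), and since the flat bundle attached to a Higgs bundle on $Y$ of this type has a monodromy representation of $\pi_1(Y,f(x_0))$, one directly exhibits the required $\rho'_\lambda\in\text{Hom}(\pi_1(Y,f(x_0)),\text{GL}(\mathfrak g))$ as that monodromy; then $\rho'_\lambda\circ f_* = A\circ\rho^\lambda$ up to conjugation, and uniqueness from the Remark pins it down. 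Either way the content is the same: factorization of $A\circ\rho$ through $f_*$ is a statement about the adjoint bundle being pulled back from $Y$, and this property is manifestly preserved under scaling the Higgs field.
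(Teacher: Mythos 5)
Your proposal is correct and follows essentially the same route as the paper: identify the adjoint Higgs bundle of $(E_\rho,\lambda\cdot\theta_\rho)$ with the pullback $(f^*E',f^*(\lambda\cdot\theta'))$, and use the compatibility of the non-abelian Hodge correspondence with pullback to conclude that $A\circ\rho^\lambda$ agrees, up to conjugation, with $\delta\circ f_*$ where $\delta$ is a monodromy representation of $\pi_1(Y,f(x_0))$ attached to $(E',\lambda\cdot\theta')$. Your ``alternative, cleaner route'' at the end is in fact precisely the argument the paper gives.
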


\begin{proof}
Let $(\text{ad}(E_\rho)^\lambda\, , \text{ad}(\theta_\rho)^\lambda)$ be the Higgs vector bundle
associated to the homomorphism $A\circ\rho^\lambda$.
We note that $(\text{ad}(E_\rho)^\lambda\, , \text{ad}(\theta_\rho)^\lambda)$ is isomorphic to
$(f^*E'\, ,f^*(\lambda\cdot \theta'))$, because the Higgs bundle $(E'\, ,\theta')$
corresponds to $\rho'$, and \eqref{e3} holds. We saw in the proof of Lemma \ref{lem1} that
$E'$ is semistable with $c_i(E')\,=\, 0$ for all $i\, >\, 0$. Since
$(\text{ad}(E_\rho)^\lambda\, , 
\text{ad}(\theta_\rho)^\lambda)$ is isomorphic to the pullback of a semistable Higgs vector 
bundle on $Y$ such that all the Chern classes of positive degrees of the underlying
vector bundle on $Y$ vanish, it can be deduced that $A\circ\rho^\lambda$
factors through the quotient $\pi_1(Y, f(x_0))$. In fact, if
$$
\delta\, :\, \pi_1(Y, f(x_0))\, \longrightarrow\, \text{GL}({\mathfrak g})
$$
is a homomorphism corresponding to the Higgs vector bundle $(E'\, , \lambda\cdot\theta')$,
then
\begin{itemize}
\item the homomorphism $A\circ\rho^\lambda$ factors through the quotient 
$\pi_1(Y, f(x_0))$, and

\item the homomorphism $\pi_1(Y, f(x_0))\, \longrightarrow\,
\text{GL}({\mathfrak g})$ resulting from $A\circ\rho^\lambda$ differs from
$\delta$ by an inner automorphism of $\text{GL}({\mathfrak g})$.
\end{itemize}
This completes the proof.
\end{proof}

Fix a maximal compact subgroup
$$
K\, \subset\, G\, .
$$
Define
$$
{\mathcal R }^f(\pi_1(X, x_0),\, G)\,=\, \{\rho\, \in\, \text{Hom}(\pi_1(X,x_0),\, G)\,
\mid\, A\circ\rho \ \text{ factors through }~ f_*\}\, ,
$$
$$
{\mathcal R }^f(\pi_1(X, x_0),\, K)\,=\, \{\rho\, \in\, \text{Hom}(\pi_1(X,x_0),\, K)\,
\mid\, A\circ\rho \ \text{ factors through }~ f_*\}\, .
$$
Since $\pi_1(X, x_0)$ is a finitely presented group, the affine algebraic structure of $G$
produces an affine algebraic structure on ${\mathcal R }^f(\pi_1(X, x_0),\, G)$. The group $G$
acts on ${\mathcal R }^f(\pi_1(X, x_0),\, G)$ via the conjugation action of $G$ on itself. Let
$$
{\mathcal R }^f(\pi_1(X, x_0),\, G)/\!\!/G
$$
be the corresponding geometric invariant theoretic quotient. We note that this
geometric invariant theoretic quotient
${\mathcal R }^f(\pi_1(X, x_0),\, G)/\!\!/G$ is a complex affine algebraic variety. Let
$$
{\mathcal R }^f(\pi_1(X, x_0),\, K)/K
$$
be the quotient of ${\mathcal R }^f(\pi_1(X, x_0),\, K)$ for the adjoint action of $K$ on
itself.

The inclusion of $K$ in $G$ produces an inclusion of ${\mathcal R }^f(\pi_1(X, x_0),\, K)$ in
${\mathcal R }^f(\pi_1(X, x_0),\, G)$, which, in turn, gives an inclusion
\begin{equation}\label{incl}
{\mathcal R }^f(\pi_1(X, x_0),\, K)/K\, \hookrightarrow\, {\mathcal R }^f(\pi_1(X, x_0),\, G)/\!\!/G\, .
\end{equation}
Instead of working with the Zariski topology on ${\mathcal R }^f(\pi_1(X, x_0),\, G)/\!\!/G$, we consider on it the 
Euclidean topology which is induced from an embedding of this space in a complex affine space.
Indeed, such an embedding can always be obtained by considering a finite set of generators of the algebra 
of $G$-invariant regular functions on ${\mathcal R }^f(\pi_1(X, x_0),\, G)$. Moreover, this topology is
independent of the choice of such embedding, and compatible with the inclusion \eqref{incl}.

\begin{theorem}\label{thm1}
The topological space ${\mathcal R }^f(\pi_1(X, x_0),\, G)/\!\!/G$ admits a deformation
retraction to the above subset ${\mathcal R }^f(\pi_1(X, x_0),\, K)/K$.
\end{theorem}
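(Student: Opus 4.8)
The plan is to obtain the retraction from the $\mathbb C$--action on Higgs bundles that scales the Higgs field, i.e.\ from the assignment $\rho\,\longmapsto\,\rho^\lambda$ of \eqref{e5}. By Corollary \ref{cor1} the $G$--Higgs bundle $(E_\rho\, ,\lambda\cdot\theta_\rho)$ is semistable for every $\lambda$, and by Lemma \ref{lem2} the homomorphism $A\circ\rho^\lambda$ again factors through $f_*$. Since, as observed after \eqref{e5}, the class of $\rho^\lambda$ in the quotient is independent of the choices made, the rule $H(\lambda\, ,[\rho])\,:=\,[\rho^\lambda]$, for $\lambda\,\in\,[0,1]$, is a well--defined map from $[0,1]\times\big({\mathcal R}^f(\pi_1(X, x_0),\, G)/\!\!/G\big)$ into ${\mathcal R}^f(\pi_1(X, x_0),\, G)/\!\!/G$; throughout, I represent each point of the quotient by a reductive $\rho$, so that $(E_\rho\, ,\theta_\rho)$ is polystable.

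I would then verify the three elementary properties of a strong deformation retraction. First, $H(1\, ,[\rho])\,=\,[\rho]$. Second, if $\rho$ has image in $K$, then $\rho$ underlies a flat unitary bundle, its harmonic metric is flat and $\theta_\rho\,=\,0$, so $\rho^\lambda\,=\,\rho$ for all $\lambda$; hence $H(\lambda\, ,\cdot)$ restricts to the identity on ${\mathcal R}^f(\pi_1(X, x_0),\, K)/K$ for every $\lambda$. Third, at $\lambda\,=\,0$ the Higgs field is zero and, by Lemma \ref{lem1}, the bundle $E_\rho$ is semistable; since moreover all its Chern classes vanish ($E_\rho$ underlies a flat bundle, as seen in the proof of Lemma \ref{lem1}), the polystable representative of the $S$--equivalence class of $(E_\rho\, ,0)$ is a polystable $G$--bundle with vanishing Chern classes, which by the Narasimhan--Seshadri--Ramanathan correspondence (see \cite{AB}) arises, up to conjugation, from a representation of $\pi_1(X,x_0)$ into $K$. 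Combined with Lemma \ref{lem2}, this places $H(0\, ,[\rho])$ in ${\mathcal R}^f(\pi_1(X, x_0),\, K)/K$, regarded inside ${\mathcal R}^f(\pi_1(X, x_0),\, G)/\!\!/G$ through \eqref{incl}.

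The substantive point, and the step I expect to be the main obstacle, is the continuity of $H$ for the Euclidean topology --- above all at $\lambda\,=\,0$. Through the non--abelian Hodge correspondence \cite{Si}, \cite{BG}, the space ${\mathcal R}^f(\pi_1(X, x_0),\, G)/\!\!/G$ is carried homeomorphically onto a moduli space of polystable $G$--Higgs bundles, on which $\theta\,\mapsto\,\lambda\cdot\theta$ is algebraic for $\lambda\,\neq\,0$; the danger is that the $\mathbb C^*$--orbit of $[(E_\rho\, ,\theta_\rho)]$ might limit, as $\lambda\,\to\,0$, to a complex variation of Hodge structure with nonzero Higgs field rather than to $[(E_\rho\, ,0)]$, which would destroy continuity. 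It is here that the hypotheses on $f$ and $Y$ are essential. Since $A\circ\rho$ factors through the virtually nilpotent group $\pi_1(Y, f(x_0))$, and the factoring representation $\rho'$ is semisimple (because $\rho$ is reductive and $f_*$ is surjective), $\rho'$ is virtually diagonalizable; hence its Higgs bundle $(E'\, ,\theta')$ is, after a finite \'etale covering of $Y$, a direct sum of Higgs \emph{line} bundles, and such a Higgs bundle supports no complex variation of Hodge structure unless $\theta'\,=\,0$, its $\mathbb C^*$--flow converging to $(E'\, ,0)$. Pulling this back along $f$, and using the identity $\text{ad}(E_\rho)\,=\,f^*E'$ together with the fact that the kernel of $A$ is central, one concludes $\lim_{\lambda\to 0}[(E_\rho\, ,\lambda\cdot\theta_\rho)]\,=\,[(E_\rho\, ,0)]$, so that $H$ is continuous; this step adapts the deformation--retraction arguments of \cite{FL}, \cite{BF1} and \cite{BF2}. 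Since ${\mathcal R}^f(\pi_1(X, x_0),\, G)/\!\!/G$ is a closed subvariety, invariant under the scaling by Lemma \ref{lem2}, and carries the subspace topology compatible with \eqref{incl}, the restriction $H$ is continuous there as well. Reparametrising by $\lambda\,=\,1-t$ then exhibits $H$ as a (strong) deformation retraction of ${\mathcal R}^f(\pi_1(X, x_0),\, G)/\!\!/G$ onto ${\mathcal R}^f(\pi_1(X, x_0),\, K)/K$.
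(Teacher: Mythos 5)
Your proposal follows essentially the same route as the paper: both define the retraction by $[\rho]\,\longmapsto\,[\rho^{\lambda}]$, i.e., by scaling the Higgs field of the associated $G$--Higgs bundle, and both rely on Lemma \ref{lem1}, Corollary \ref{cor1} and Lemma \ref{lem2} to see that the scaled object stays in ${\mathcal R}^f(\pi_1(X,x_0),\,G)/\!\!/G$ and lands in ${\mathcal R}^f(\pi_1(X,x_0),\,K)/K$ at the endpoint. Your added discussion of continuity at $\lambda=0$ concerns a point the paper declares ``straightforward''; note only that since $(E_\rho,\lambda\theta_\rho)$ is an algebraic family of semistable Higgs bundles over all of $\mathbb C$ (not merely over $\mathbb C^*$), the coarse moduli property already forces $\lim_{\lambda\to 0}[(E_\rho,\lambda\theta_\rho)]=[(E_\rho,0)]$, so the ``virtually diagonalizable'' detour is unnecessary (and, as stated, not obviously correct).
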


\begin{proof}
Two elements of $\text{Hom}(\pi_1(X,x_0),\, G)$ are called equivalent if they differ by an inner
automorphism of $G$. Points of ${\mathcal R }^f(\pi_1(X, x_0),\, G)/\!\!/G$ correspond to
the equivalence
classes of homomorphisms $\rho\, \in\, \text{Hom}(\pi_1(X,x_0),\, G)$ such that the action
of $\pi_1(X,x_0)$ on $\mathfrak g$ given by $A\circ\rho$ is completely reducible, meaning that
$\mathfrak g$ is a direct sum of irreducible $\pi_1(X,x_0)$--modules. Let $(E_\rho\, ,
\theta_\rho)$ be the semistable $G$--Higgs bundle corresponding to the
above homomorphism $\rho$, and let $(\text{ad}(E_\rho)\, ,
\text{ad}(\theta_\rho))$ be the semistable adjoint
Higgs vector bundle associated to $(E_\rho\, ,\theta_\rho)$.
The above condition that the action of $\pi_1(X,x_0)$ on $\mathfrak g$ given by $A\circ\rho$ is
completely reducible is equivalent to the condition that the semistable Higgs vector bundle
$(\text{ad}(E_\rho)\, , \text{ad}(\theta_\rho))$ is polystable.

Let
$$
\phi\, :\, ({\mathcal R }^f(\pi_1(X, x_0),\, G)/\!\!/G)\times [0\, ,1]\, \longrightarrow\,
{\mathcal R }^f(\pi_1(X, x_0),\, G)/\!\!/G
$$
be the map defined by $(\rho\, ,\lambda)\, \longmapsto\, \rho^{1-\lambda}$ (defined in
\eqref{e5}), where $\rho\,\in\, \text{Hom}(\pi_1(X,x_0),\, G)$ satisfies the
condition that the action of $\pi_1(X,x_0)$ on $\mathfrak g$ given by
$A\circ\rho$ is completely reducible. It is easy to see that $\phi$ is well-defined. We note
that the point in the geometric invariant theoretic quotient
${\mathcal R }^f(\pi_1(X, x_0),\, G)/\!\!/G$ given by $\rho$ lies in the
subset ${\mathcal R }^f(\pi_1(X, x_0),\, K)/K$ if and only if the Higgs field $\theta_\rho$ on
the principal $G$--bundle $E_\rho$ vanishes identically (as before, $(E_\rho\, ,\theta_\rho)$ is
the Higgs $G$--bundle corresponding to $\rho$).

The following are straightforward to check:
\begin{itemize}
\item $\phi(z\, ,0)\, =\, z$ for all $z\, \in\, {\mathcal R }^f(\pi_1(X, x_0),\, G)/\!\!/G$,

\item $\phi(z\, ,1)\, \in \, {\mathcal R }^f(\pi_1(X, x_0),\, K)/K$ for all $z\, \in\,
{\mathcal R }^f(\pi_1(X, x_0),\, G)/\!\!/G$, and

\item $\phi(z\, ,\lambda)\, =\, z$ for all $z\, \in\,{\mathcal R }^f(\pi_1(X, x_0),\, K)/K$
and $\lambda\, \in\, [0\, ,1]$.
\end{itemize}
Therefore, the above map $\phi$ produces
a deformation retraction of ${\mathcal R }^f(\pi_1(X, x_0),\, G)/\!\!/G$
to ${\mathcal R }^f(\pi_1(X, x_0),\, K)/K$.
\end{proof}

\begin{remark}
Lemma \ref{lem1} and Theorem \ref{thm1} are also valid for morphisms 
$f\, :\, X\, \longrightarrow\, Y$ in the category of compact K\"ahler manifolds, 
under the same assumptions on $Y$ and $f_*$. The proofs of these results are 
analogous, by replacing semistability with the notion of \emph{pseudostability} (see 
\cite{BG}, \cite{BF2}).
\end{remark}

\section{Deformation retraction of the space of almost commuting elements}

Again, let $G$ be a connected complex reductive group, and $K$ be a maximal compact subgroup.
Let $$Z_G\, \subset\, G$$ be the center of $G$ and let
$$
PG\,:=\, G/Z_G
$$
be the quotient group. We note that the center of $PG$ is trivial. Let
\begin{equation}\label{q}
q\,:\, G\, \longrightarrow\, PG
\end{equation}
be the quotient map. The image
$$
PK\, :=\, q(K)\, \subset\, PG
$$
is a maximal compact subgroup of $PG$. We have $q^{-1}(PK)\,=\, K$.

Fix a positive integer $n$. Define
$$
{\rm AC}^n(G)\,=\, \{(g_1\, ,\cdots\, , g_n)\,\in\, G^n\, \mid\, g_ig_jg^{-1}_ig^{-1}_j
\,\in\, Z_G~\, \ \forall \ i\, ,j\}\, .
$$
It is a subscheme of the affine variety $G^n$. The group $G$
acts on ${\rm AC}^n(G)$ as simultaneous conjugation of the $n$ factors. Let
$$
{\rm ACE}^n(G)\,:=\, {\rm AC}^n(G)/\!\!/ G
$$
be the geometric invariant theoretic quotient. Also, define
$$
{\rm AC}^n(K)\,=\, \{(g_1\, ,\cdots\, , g_n)\,\in\, K^n\, \mid\, g_ig_jg^{-1}_ig^{-1}_j
\,\in\, Z_G~\, \ \forall \ i\, ,j\}\, .
$$
So ${\rm AC}^n(K)\,=\, {\rm AC}^n(G)\bigcap K^n$. Let
$$
{\rm ACE}^n(K)\,:=\, {\rm AC}^n(K)/K
$$
be the quotient for the simultaneous conjugation action of $K$ on the
$n$ factors. Note that the inclusion of $K$ in $G$ produces an inclusion
$${\rm ACE}^n(K)\,\hookrightarrow\, {\rm ACE}^n(G)\, .$$

\begin{proposition}\label{prop1}
Let $G$ be semisimple. Then, the topological space ${\rm ACE}^n(G)$ admits a deformation retraction to
the above subset ${\rm ACE}^n(K)$.
\end{proposition}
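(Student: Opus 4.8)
The plan is to construct the deformation retraction by hand, following the pattern of the proof of Theorem~\ref{thm1} but with the scaling of a Higgs field replaced by the scaling of the non-compact direction of a Cartan (polar) decomposition inside $G$. First I would recall that, for the conjugation action of $G$ on the affine variety ${\rm AC}^n(G)\subseteq G^n$, the points of the geometric invariant theoretic quotient ${\rm ACE}^n(G)={\rm AC}^n(G)/\!\!/G$ are exactly the closed $G$--orbits, and that the orbit of $(g_1,\dots,g_n)$ is closed precisely when the Zariski closure $H:=\overline{\langle g_1,\dots,g_n\rangle}\subseteq G$ is a reductive algebraic subgroup. As in the proof of Theorem~\ref{thm1}, this locus can also be described through the adjoint representation of $G$ on $\mathfrak g=\text{Lie}(G)$: since the center of $G$ is contained in the kernel of this representation, the images of $g_1,\dots,g_n$ in $\text{GL}(\mathfrak g)$ commute pairwise, so the associated action factors through $\mathbb Z^n$, and the orbit is closed exactly when $\mathfrak g$ is a completely reducible $\mathbb Z^n$--module.

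Next I would analyse the structure of $H$ for such a tuple. Because the commutators $g_ig_jg_i^{-1}g_j^{-1}$ lie in the finite group $Z_G$, the group $H$ is nilpotent of class at most two with $[H,H]\subseteq Z_G$; hence its identity component $H^0$ is a connected reductive nilpotent group, i.e.\ a torus, and moreover $H^0$ is central in $H$ --- indeed, for $h\in H$ and $t\in H^0$ the commutator $[h,t]$ lies in the finite set $H^0\cap Z_G$ and depends continuously on $t$, so it is trivial. Choosing a Cartan involution $\Theta$ of $G$ under which $H$ is stable (possible because $H$ is reductive) and letting $K'=G^\Theta$ be the corresponding maximal compact subgroup, each $g_i$ acquires a polar decomposition $g_i=p_ik_i$ with $k_i\in H\cap K'$ and $p_i=\exp(\xi_i)$ for a unique $\xi_i$ in the non-compact part $\mathfrak a$ of the abelian Lie algebra $\text{Lie}(H^0)$. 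Since $H^0$ is a torus, both its maximal compact subgroup $H^0\cap K'$ and the subspace $\mathfrak a$ are intrinsic to $H^0$, and an argument with the identity $p_i^2=g_i\Theta(g_i)^{-1}$ (using the centrality of $H^0$ in $H$) shows that $p_i$, and hence each power $p_i^{\,s}:=\exp(s\xi_i)$ for $s\in\mathbb R$, depends only on $(g_1,\dots,g_n)$ and not on $\Theta$. Finally, being an element of $H^0$, each $p_i$ is central in $H$ and therefore commutes with every $g_j$ and every $p_j$.

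Granting this, I would put
\[
\Phi\bigl([(g_1,\dots,g_n)],\,\lambda\bigr)\;:=\;\bigl[(p_1^{-\lambda}g_1,\dots,p_n^{-\lambda}g_n)\bigr]\;=\;\bigl[(p_1^{\,1-\lambda}k_1,\dots,p_n^{\,1-\lambda}k_n)\bigr],\qquad\lambda\in[0,1].
\]
Pulling the central factors $p_i^{-\lambda}$ out of a commutator shows that $(p_i^{-\lambda}g_i)(p_j^{-\lambda}g_j)(p_i^{-\lambda}g_i)^{-1}(p_j^{-\lambda}g_j)^{-1}=g_ig_jg_i^{-1}g_j^{-1}\in Z_G$, so every tuple $(p_i^{-\lambda}g_i)$ again lies in ${\rm AC}^n(G)$; at $\lambda=0$ one recovers $(g_i)$; at $\lambda=1$ one lands in $(H\cap K')^n\subseteq{\rm AC}^n(K')$, and, $K'$ being conjugate to $K$, the corresponding class lies in ${\rm ACE}^n(K)$; and if the original tuple already lies in $K^n$ then one may take $\Theta$ so that $K'=K$, which forces every $p_i=e$ and makes $\Phi$ stationary. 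Thus $\Phi$ has the three defining properties of a deformation retraction of ${\rm ACE}^n(G)$ onto the subset ${\rm ACE}^n(K)$.

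The step I expect to be the main obstacle is proving that $\Phi$ is well defined and continuous on the quotient. Well-definedness follows from the canonicity noted above: $p_i$ is intrinsic to $(g_i)$; any two Cartan involutions of $G$ stabilizing $H$ restrict to Cartan involutions of $H$ conjugate by an element of the central torus $H^0$, which centralizes each $g_i$; and replacing $(g_i)$ by a $G$--conjugate conjugates the whole construction --- so $\Phi$ descends to a map ${\rm ACE}^n(G)\times[0,1]\to{\rm ACE}^n(G)$. Continuity is the delicate point, since $H$, and with it the admissible $\Theta$, varies discontinuously with the tuple; I would deduce it from the Kempf--Ness picture, in which the endpoint $(k_1,\dots,k_n)$ is the minimal vector (unique up to $K$--conjugation) in the closed orbit and $\lambda\mapsto(p_i^{-\lambda}g_i)$ is the associated gradient-type flow, known to depend continuously on the point of the affine quotient --- the same mechanism used for commuting tuples and for free-group representations in \cite{FL}, \cite{BF1}, \cite{BFM}; alternatively, one may seek to recover continuity by transporting $\Phi$ through the non-abelian Hodge correspondence as in Theorem~\ref{thm1}, encoding $(g_1,\dots,g_n)$ by a semistable Higgs bundle whose adjoint bundle is pulled back from an abelian variety surjecting onto $\mathbb Z^n$. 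Finally, since ${\rm AC}^n(G)$ may fail to be connected, I would note that a deformation retraction of a disjoint union is simply the union of the retractions on the pieces, so this poses no difficulty.
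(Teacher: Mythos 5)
Your construction is a genuinely different route from the paper's. You build the retraction by hand: pass to the polystable representative, observe that the Zariski closure $H$ of the tuple has central identity component a torus, split each $g_i=p_ik_i$ along a Cartan involution stabilizing $H$, and scale the polar part $p_i^{\,1-\lambda}$. The pointwise construction is sound (the centrality of $H^0$ in $H$ does make the maximal compact of $H$ unique, hence $p_i$ intrinsic, and the commutator computations you indicate all go through), and it is in essence the mechanism underlying \cite{FL} and \cite{PS}. The paper does something much softer: since $G$ is semisimple, $Z_G$ is finite, so $q\colon G\to PG=G/Z_G$ is a finite Galois covering; because $PG$ has trivial center, ${\rm AC}^n(PG)$ is the variety of \emph{commuting} tuples, for which the deformation retraction ${\rm ACE}^n(PG)\to{\rm ACE}^n(PK)$ is already known \cite[Theorem 1.1]{FL}; and the induced map $\beta\colon{\rm ACE}^n(G)\to{\rm ACE}^n(PG)$ is a Galois covering with group $Z_G^n$, so the known retraction lifts uniquely by the homotopy lifting property and the lift is the desired retraction. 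What the paper's route buys is precisely that no new continuity statement has to be proved; what your route would buy, if completed, is an explicit flow and independence from the covering argument.

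The genuine gap in your version is the continuity of $\Phi$ on the GIT quotient, and you should not treat it as available off the shelf. The assertion that the scaling $\lambda\mapsto[(p_i^{-\lambda}g_i)]$ ``depends continuously on the point of the affine quotient'' is not a quotable theorem for almost commuting tuples: it is exactly the main technical content of \cite{FL} (proved there for ${\rm Hom}(\mathbb Z^n,G)/\!\!/G$, i.e.\ for genuinely commuting tuples), and the difficulty is that $H$, the admissible Cartan involutions, and hence the $p_i$ jump discontinuously as the tuple varies, while only the induced map on the quotient is supposed to be continuous. Your fallback via non-abelian Hodge theory has the same problem in a different guise (one would need an analogue of the continuity of the Higgs-field scaling used in Theorem \ref{thm1}, which the paper also does not prove from scratch but inherits from the moduli-theoretic setup). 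So either you must reprove the \cite{FL} continuity argument in the almost commuting setting, or you should notice the reduction the paper uses: kill the center, so that ``almost commuting'' becomes ``commuting,'' apply \cite{FL} to $PG$, and lift through the finite covering ${\rm ACE}^n(G)\to{\rm ACE}^n(PG)$. The second option turns your hard analytic step into a two-line topological one.
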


\begin{proof}
When $G$ is semisimple, $Z_G$ is a finite subgroup of $G$, so that the map \eqref{q} is a Galois covering.
Also, $Z_G\, \subset\, K$. 
Define ${\rm AC}^n(PG)$ and ${\rm ACE}^n(PG)$ by substituting $PG$ in place of $G$
in the above constructions. Note that ${\rm AC}^n(PG)$ parametrizes commuting
$n$ elements of $PG$ because the center of $PG$ is trivial. Similarly, define ${\rm AC}^n
(PK)$ and ${\rm ACE}^n(PK)$ by substituting $PK$ in place of $K$. So
${\rm AC}^n(PK)$ parametrizes commuting $n$ elements of $PK$. The projection
\begin{equation}\label{b2}
\beta\, :\, {\rm ACE}^n(G)\, \longrightarrow\, {\rm ACE}^n(PG)
\end{equation}
constructed using the the projection $q$ in \eqref{q} is a Galois covering with Galois group
$Z^n_G$. However it should be mentioned that ${\rm ACE}^n(G)$ need not be connected. Let
$$
\gamma\, :\, {\rm ACE}^n(K)\, \longrightarrow\, {\rm ACE}^n(PK)
$$
be the projection constructed similarly using $q$. Clearly, $\gamma$ coincides with the
restriction of $\beta$ to ${\rm ACE}^n(K)\, \subset\, {\rm ACE}^n(G)$.

There is a deformation retraction of ${\rm ACE}^n(PG)$ to ${\rm ACE}^n(PK)$
$$
\varphi\, :\, {\rm ACE}^n(PG)\times [0\, ,1]\, \longrightarrow\, {\rm ACE}^n(PG)
$$
\cite[Theorem 1.1]{FL} (see also \cite{BF1}). In particular, $\varphi\vert_{{\rm ACE}^n(PG)\times\{0\}}$ is the identity
map of ${\rm ACE}^n(PG)$.

Applying the homotopy lifting property to the covering $\beta$ in \eqref{b2}, there is a unique map
$$
\widetilde{\varphi}\, :\, {\rm ACE}^n(G)\times [0\, ,1]\, \longrightarrow\, {\rm ACE}^n(G)
$$
such that
\begin{enumerate}
\item $\beta\circ\widetilde{\varphi}\,=\, \varphi\circ (\beta\times\text{Id}_{[0,1]})$,
and

\item $\widetilde{\varphi}\vert_{{\rm ACE}^n(G)\times\{0\}}$ is the identity
map of ${\rm ACE}^n(G)$.
\end{enumerate}
This map $\widetilde{\varphi}$ is a deformation retraction of
${\rm ACE}^n(G)$ to ${\rm ACE}^n(K)$, because $\varphi$ is a deformation retraction.
\end{proof}

Proposition \ref{prop1} remains valid in the more general situation when $G$ is reductive.

\begin{theorem}
Let $G$ be a connected reductive affine
algebraic group over $\mathbb C$. Then, ${\rm ACE}^n(G)$ admits a deformation retraction to the subset ${\rm ACE}^n(K)$.
\end{theorem}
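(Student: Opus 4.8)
The plan is to reduce to the semisimple case already settled in Proposition \ref{prop1}, using the structure theory of connected reductive groups. Write $T\,:=\,Z^0_G$ for the connected component of the centre, so $T\,\cong\,(\mathbb C^\ast)^r$ is a complex torus, and $G_s\,:=\,[G,G]$ for the derived subgroup, which is semisimple. The multiplication map $T\times G_s\,\longrightarrow\, G$ is surjective with finite central kernel $F\,=\,\{(z\, ,z^{-1})\,:\,z\,\in\, T\cap G_s\}$, so $G\,=\,(T\times G_s)/F$. Choosing $K$ compatibly, $K_T\,:=\,K\cap T$ is a maximal compact subgroup of $T$, $K_s\,:=\,K\cap G_s$ a maximal compact subgroup of $G_s$, and $F\,\subset\, K_T\times K_s$ because any finite subgroup of $T\times G_s$ lies in $K_T\times K_s$; hence $K\,=\,(K_T\times K_s)/F$.

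First I would translate the almost--commuting condition across this decomposition. For a lift $(t_i\, ,s_i)\,\in\, T\times G_s$ of $g_i\,\in\, G$, the commutator $g_ig_jg_i^{-1}g_j^{-1}$ is the image of $(1\, ,s_is_js_i^{-1}s_j^{-1})$, and since the image of $(1\, ,w)$ lies in $Z_G$ exactly when $w\,\in\, Z_{G_s}$, one sees that $g_ig_jg_i^{-1}g_j^{-1}\,\in\, Z_G$ if and only if $s_is_js_i^{-1}s_j^{-1}\,\in\, Z_{G_s}$; moreover neither this condition nor the element $s_is_js_i^{-1}s_j^{-1}$ depends on the chosen lift. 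Therefore ${\rm AC}^n(G)\,=\,(T^n\times {\rm AC}^n(G_s))/F^n$ and, intersecting with $K^n$, ${\rm AC}^n(K)\,=\,(K^n_T\times {\rm AC}^n(K_s))/F^n$, where $F^n$ acts by the covering action on each factor. Since $T$ and $F$ are central, the simultaneous conjugation action of $G$ on ${\rm AC}^n(G)$ is trivial on the $T^n$--factor and coincides with the conjugation action of $G_s$ on ${\rm AC}^n(G_s)$, and this lifts to the cover because $F^n$ is central. Passing to invariants — GIT quotients commute with a product by a trivially--acted affine factor and with quotients by finite central subgroups — yields homeomorphisms
\[
{\rm ACE}^n(G)\,\cong\,(T^n\times {\rm ACE}^n(G_s))/F^n,\qquad
{\rm ACE}^n(K)\,\cong\,(K^n_T\times {\rm ACE}^n(K_s))/F^n,
\]
compatible with the inclusion ${\rm ACE}^n(K)\,\hookrightarrow\,{\rm ACE}^n(G)$ and with the Euclidean topologies.

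Next I would assemble the deformation retraction on the product and descend it. On $T^n\,\cong\,(\mathbb C^\ast)^{rn}$ there is the standard radial deformation retraction $\psi$ onto $K^n_T\,\cong\,(S^1)^{rn}$, given coordinatewise by $(z\, ,\lambda)\,\longmapsto\, z\,|z|^{-\lambda}$; it is equivariant under translation by roots of unity, in particular under the $F^n$--action, whose projection to $T^n$ takes values in $K^n_T$. On ${\rm ACE}^n(G_s)$ there is the deformation retraction $\widetilde\varphi$ onto ${\rm ACE}^n(K_s)$ provided by Proposition \ref{prop1} applied to the semisimple group $G_s$; since $\widetilde\varphi$ is obtained by lifting a homotopy through the Galois covering ${\rm ACE}^n(G_s)\,\to\,{\rm ACE}^n(PG_s)$ with deck group $Z^n_{G_s}$, uniqueness of homotopy lifts forces $\widetilde\varphi$ to be $Z^n_{G_s}$--equivariant, hence $F^n$--equivariant. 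Consequently $\psi\times\widetilde\varphi$ is an $F^n$--equivariant deformation retraction of $T^n\times {\rm ACE}^n(G_s)$ onto $K^n_T\times {\rm ACE}^n(K_s)$, and, being equivariant, it descends through the finite quotient maps to a deformation retraction of ${\rm ACE}^n(G)$ onto ${\rm ACE}^n(K)$, as wanted.

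The step I expect to be the main obstacle is the bookkeeping in the second paragraph: checking that the almost--commuting locus, the simultaneous conjugation action, and the resulting GIT quotient all split cleanly under $G\,=\,(T\times G_s)/F$, so that the problem genuinely decomposes as a trivially retractible torus direction times the already--solved semisimple direction, and verifying that the $F^n$--equivariance of $\widetilde\varphi$ (which one gets for free from uniqueness of lifts rather than from the statement of Proposition \ref{prop1}) matches the $F^n$--equivariance of $\psi$ so that the two retractions can be combined. Everything else — descending an equivariant deformation retraction through a finite quotient, and the compatibility with the Euclidean topologies — is formal.
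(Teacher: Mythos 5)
Your argument is correct, and it is essentially the mirror image of the paper's: both proofs reduce the reductive case to ``semisimple times torus'' via a finite central isogeny and then feed in Proposition \ref{prop1} together with the trivial torus case. The paper goes \emph{downward}, using the surjection $\eta\colon G \longrightarrow PG \times (G/[G,G])$ with finite central kernel $Z_G\cap[G,G]$, and lifts the deformation retraction of ${\rm ACE}^n\bigl(PG\times (G/[G,G])\bigr)$ through the induced covering, exactly as in the proof of Proposition \ref{prop1}. You go \emph{upward}, writing $G=(T\times G_s)/F$ and descending an $F^n$--equivariant retraction from the cover. Your route costs more bookkeeping (the identifications ${\rm AC}^n(G)=(T^n\times{\rm AC}^n(G_s))/F^n$ and ${\rm ACE}^n(G)\cong (T^n\times{\rm ACE}^n(G_s))/F^n$, both of which you verify correctly), but in exchange it makes the global product structure of ${\rm ACE}^n(G)$ explicit and reuses only the covering $\beta$ already set up in Proposition \ref{prop1}; your observation that the lifted homotopy $\widetilde{\varphi}$ is automatically $Z^n_{G_s}$--equivariant by uniqueness of lifts is precisely the point that makes the descent work, and it is a fact the paper never needs to state. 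The paper's version is shorter but leaves to the reader the checks that $\eta$ is surjective, that the induced map on ${\rm ACE}^n$ is again a covering for this new isogeny, and that the lifted homotopy terminates in ${\rm ACE}^n(K)$ --- checks of roughly the same weight as the ones you carry out explicitly. Either way the essential input is the same, and your proof is complete as written.
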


\begin{proof}
First, note that Proposition \ref{prop1} is clearly valid if $G$ is a product of copies of the multiplicative group
${\mathbb C}^*$. Hence it remains valid for any $G$ which is a product of a semisimple group
and copies of ${\mathbb C}^*$. For a general connected
reductive group $G$, consider the natural homomorphism
$$
\eta\, :\, G\, \longrightarrow\, PG \times (G/[G\, ,G])\, .
$$
It is a surjective Galois covering map, the quotient $PG \,:=\, G/Z_G$ is semisimple,
while the quotient $G/[G\, ,G]$ is a product of copies of ${\mathbb C}^*$. As mentioned
above Proposition \ref{prop1} is valid for $PG \times (G/[G\, ,G])$. Using this and
the above homomorphism $\eta$ it follows that Proposition \ref{prop1} is valid for $G$.
\end{proof}

\subsection{Deformation retraction of the space of $n$ commuting elements}

Finally, we note that the analogous result is also verified for the space of $n$ commuting elements, 
${\rm AC}^n(G)$.

\begin{theorem}
Let $G$ be a connected reductive affine algebraic group over $\mathbb C$. 
Then, the space ${\rm AC}^n(G)$ admits a deformation retraction to
the subset ${\rm AC}^n(K)$.
\end{theorem}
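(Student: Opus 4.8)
The plan is to run the covering-space argument of Proposition~\ref{prop1} and the preceding theorem at the level of the varieties ${\rm AC}^n$ themselves rather than their GIT quotients ${\rm ACE}^n$. The only external input needed beyond what is already set up is the space-level statement of \cite{FL}, \cite{BF1}: when $H$ is a connected reductive group with a maximal compact subgroup $K_H$, $\text{Hom}(\mathbb Z^n,H)$ admits a strong deformation retraction onto $\text{Hom}(\mathbb Z^n,K_H)$ (identity at time $0$, fixing $\text{Hom}(\mathbb Z^n,K_H)$ at all times). I would first reduce as in the preceding theorem: for $G=(\mathbb C^{*})^{r}$ the claim is the radial retraction of $(\mathbb C^{*})^{rn}$ onto $(S^{1})^{rn}$; and for a general connected reductive $G$ one uses the surjective finite Galois covering $\eta\colon G\to H:=PG\times(G/[G,G])$, whose kernel $Z_G\cap[G,G]$ is finite because $[G,G]$ is semisimple. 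Since $Z(PG)=\{1\}$, one has $Z_H=\{1\}\times(G/[G,G])$, hence $\eta^{-1}(Z_H)=Z_G$; and therefore ${\rm AC}^n(G)=(\eta^{n})^{-1}({\rm AC}^n(H))$. Restricting the finite covering $\eta^{n}\colon G^{n}\to H^{n}$ to this saturated subvariety, $\eta^{n}\colon{\rm AC}^n(G)\to{\rm AC}^n(H)$ is a finite covering.

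Next, because $G/[G,G]$ is abelian and $Z(PG)=\{1\}$, an $n$-tuple in $H$ is almost commuting if and only if it is commuting, so ${\rm AC}^n(H)=\text{Hom}(\mathbb Z^n,H)$, and likewise ${\rm AC}^n(K_H)=\text{Hom}(\mathbb Z^n,K_H)$ for the maximal compact $K_H$ of $H$. By \cite{FL}, \cite{BF1} there is a strong deformation retraction $\varphi\colon{\rm AC}^n(H)\times[0,1]\to{\rm AC}^n(H)$ onto ${\rm AC}^n(K_H)$. Applying the homotopy lifting property of the covering $\eta^{n}\colon{\rm AC}^n(G)\to{\rm AC}^n(H)$ to $\varphi$, with initial lift $\text{Id}_{{\rm AC}^n(G)}$, produces a unique $\widetilde\varphi\colon{\rm AC}^n(G)\times[0,1]\to{\rm AC}^n(G)$ with $\widetilde\varphi(\cdot,0)=\text{Id}$ and $\eta^{n}\circ\widetilde\varphi=\varphi\circ(\eta^{n}\times\text{Id}_{[0,1]})$. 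Then $\widetilde\varphi(\cdot,1)$ maps into $(\eta^{n})^{-1}({\rm AC}^n(K_H))={\rm AC}^n(K)$, using $\eta^{-1}(K_H)=K$ together with $\eta^{-1}(Z_H)=Z_G$; and for $x\in{\rm AC}^n(K)$ the path $t\mapsto\widetilde\varphi(x,t)$ lifts the constant path at $\eta^{n}(x)$ and starts at $x$, hence is constant by unique path lifting, so $\widetilde\varphi$ fixes ${\rm AC}^n(K)$ throughout. Thus $\widetilde\varphi$ is a strong deformation retraction of ${\rm AC}^n(G)$ onto ${\rm AC}^n(K)$. (The purely semisimple case entering the reduction is the same argument with $\eta$ replaced by the finite central isogeny $q\colon G\to PG$ and \cite{FL}, \cite{BF1} applied to $PG$.)

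The points that require care are the two identities ${\rm AC}^n(G)=(\eta^{n})^{-1}({\rm AC}^n(H))$ and $\eta^{-1}(K_H)=K$. The first guarantees that $\eta^{n}$ restricts to a covering onto \emph{all} of ${\rm AC}^n(H)$, rather than a local homeomorphism onto a proper subset, which is what permits lifting $\varphi$; the second is what forces the endpoint $\widetilde\varphi(\cdot,1)$ to land in ${\rm AC}^n(K)$. Both follow from the structure theory of connected reductive groups together with the facts already recorded, namely $q^{-1}(PK)=K$ and $Z(PG)=\{1\}$; for $\eta^{-1}(K_H)=K$ one also uses that the image of $K$ in the torus $G/[G,G]$ is its maximal compact subgroup and that $Z_G\cap[G,G]\subset Z([G,G])\subset K$. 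Once these are settled the homotopy-lifting step is formal — and is in fact cleaner than in Proposition~\ref{prop1}, since here the relevant covering is literally the restriction of $\eta^{n}\colon G^{n}\to H^{n}$ to a saturated subvariety rather than an induced map of GIT quotients.
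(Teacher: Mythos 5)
Your proposal is correct and follows essentially the same route as the paper: identify almost-commuting tuples in the centerless (product) quotient with commuting tuples, invoke the known space-level deformation retraction there, and lift it through the finite covering via the homotopy lifting property — you merely make explicit the saturation ${\rm AC}^n(G)=(\eta^{n})^{-1}({\rm AC}^n(H))$ and the identity $\eta^{-1}(K_H)=K$ that the paper leaves implicit in ``imitating the proof of Proposition~\ref{prop1}.'' One attribution should be fixed: the space-level retraction of $\text{Hom}(\mathbb Z^n,H)$ onto $\text{Hom}(\mathbb Z^n,K_H)$ is the theorem of Pettet--Souto \cite{PS}, which is what this theorem's proof cites; \cite{FL} and \cite{BF1} prove the character-variety (GIT quotient) version used in Proposition~\ref{prop1}, not the statement about the representation spaces themselves.
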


\begin{proof}
Since $PG$ and $PK$ have trivial center, the spaces ${\rm AC}^n(PG)$ and ${\rm AC}^n(PK)$ consist of
$n$ commuting elements: If $(g_1\, ,\cdots\, , g_n)\,\in {\rm AC}^n(PG)$, then
$$
g_i g_j = g_j g_i, \quad \text{for all} \, \, i,j \in \{1, \cdots , n \}.
$$
Therefore, it is known that ${\rm AC}^n(PG)$ admits a deformation retraction to ${\rm AC}^n(PK)$
\cite[p. 2514, Theorem 1.1]{PS}. In view of this, imitating the proof of Proposition
\ref{prop1} it follows that ${\rm AC}^n(G)$ admits a deformation retraction to
${\rm AC}^n(K)$.
\end{proof}



\begin{thebibliography}{ZZZZ}

\bibitem[AB]{AB} B. Anchouche and I. Biswas, Einstein--Hermitian
connections on polystable principal bundles over a compact K\"ahler
manifold, \textit{Amer. Jour. Math.} \textbf{123} (2001), 207--228.

\bibitem[BB]{BB} I. Biswas and U. Bruzzo, On semistable principal bundles over a
complex projective manifold. II, \textit{Geom. Dedicata} \textbf{146} (2010), 27--41.

\bibitem[BF1]{BF1}I. Biswas and C. Florentino, 
Commuting elements in reductive groups and Higgs bundles on abelian varieties, 
\textit{Jour. Alg.} \textbf{388} (2013), 194--202.

\bibitem[BF2]{BF2}I. Biswas and C. Florentino, Character varieties of virtually
nilpotent K\"ahler groups and $G$--Higgs bundles, \textit{Ann. Inst. Fourier} (to
appear), arXiv:1405.0610.

\bibitem[BFM]{BFM} A. Borel, R. Friedman, J. W. Morgan, Almost commuting elements in 
compact Lie groups, {\it Mem. Amer. Math. Soc.} {\bf 157}, 2002.

\bibitem[BG]{BG} I. Biswas and T. L. G\'omez, Connections and Higgs
fields on a principal bundle, \textit{Ann. Glob. Anal. Geom.} \textbf{33}
(2008), 19--46.

\bibitem[FL]{FL}C. Florentino and S. Lawton, Topology of character
varieties of Abelian groups, preprint arXiv:1301.7616.

\bibitem[KS]{KS} V. G. Kac and A. V. Smilga, Vacuum structure in supersymmetric 
Yang-Mills theories with any gauge group. Arxiv preprint hep-th/9902029, 1999.

\bibitem[KP]{KP} L. Katzarkov and T. Pantev, Representations of fundamental groups 
whose Higgs bundles are pullbacks, {\it J. Diff. Geom. } \textbf{39} (1994), 
103--121.

\bibitem[PS]{PS} A. Pettet and J. Souto, Commuting tuples in reductive groups and
their maximal compact subgroups, {\it Geom. Topol.} {\bf 17} (2013), 2513--2593.

\bibitem[Si]{Si} C. T. Simpson. Higgs bundles and local systems,
\textit{Inst. Hautes \'Etudes Sci. Publ. Math.} \textbf{75} (1992),
5--95.

\end{thebibliography}
\end{document}